\newtheorem{theorem}{Theorem}[section]
\newtheorem{lemma}[theorem]{Lemma}
\newtheorem{proposition}{Proposition}
\newtheorem*{problem}{Problem}
\theoremstyle{definition}
\newtheorem{definition}[theorem]{Definition}
\newtheorem{remark}{Remark}
\newtheorem{example}{Example}
\title[Regional enlarged observability of Caputo]%
{Regional enlarged observability of Caputo fractional differential equations}
\author[H. Zouiten, A. Boutoulout and D. F. M. Torres]{}
\subjclass{Primary: 35R11, 93B07; Secondary: 26A33, 93C20.}
\keywords{Fractional evolution systems, Caputo time derivatives, 
Enlarged observability, Regional reconstruction, HUM approach.}
\email{zouiten.hayat1991@gmail.com}
\email{boutouloutali@yahoo.fr}
\email{delfim@ua.pt}
\thanks{This research is part of first author's Ph.D. project, 
which is carried out at Moulay Ismail University, Meknes.}
\thanks{$^*$ Corresponding author: delfim@ua.pt}
\begin{document}

\maketitle

\centerline{\scshape Hayat Zouiten and Ali Boutoulout}
\medskip
{\footnotesize
\centerline{TSI Team, MACS Laboratory, Department of Mathematics and Computer Science,}
\centerline{Moulay Ismail University, Faculty of Sciences,}
\centerline{11201 Meknes, Morocco}
} 

\medskip

\centerline{\scshape Delfim F. M. Torres$^*$}
\medskip
{\footnotesize
\centerline{Center for Research \& Development in Mathematics and Applications (CIDMA),}
\centerline{Department of Mathematics, University of Aveiro}
\centerline{3810--193 Aveiro, Portugal}
}

\bigskip


\begin{abstract}
We consider the regional enlarged observability problem for fractional 
evolution differential equations involving Caputo derivatives. 
Using the Hilbert Uniqueness Method, we show that it is possible 
to rebuild the initial state between two prescribed functions only 
in an internal subregion of the whole domain. Finally, an example 
is provided to illustrate the theory.
\end{abstract}


\section{Introduction} 

Let $\Omega$ be an open bounded subset of  $\mathbb{R}^{n}$, 
with a regular boundary $\partial \Omega$. For $T > 0$, let us denote 
$Q_{T} = \Omega \times [0,T]$ and $\Sigma_{T} = \partial \Omega \times [0,T]$.  
We consider the following time fractional order diffusion system of order $q \in (0,1)$: 
\begin{equation}
\label{se11}
\begin{cases}
{}^{C}_{0}D_{t}^{q}y(x,t) =  A y(x,t) & \hbox{in}  \quad  Q_{T} \\
y(\xi,t) = 0 &  \hbox{on} \quad \Sigma_{T}\\ 
y(x,0)  = y_{0}(x) &  \hbox{in} \quad \Omega.
\end{cases}
\end{equation}
Without loss of generality, we denote $y(t) : = y(x, t)$.
The measurements are given by the following output function: 
\begin{equation}
\label{se12}
z(t) = C y(t), \quad t \in [0,T] \, ,
\end{equation}
where $^{C}_{0}D_{t}^{q}$ denotes the left-sided Caputo fractional order derivative 
with respect to time $t$, $A$ is a second order linear operator with dense domain 
such that the coefficients do not depend on time $t$ and generates a strongly 
continuous semi-group $(R(t))_{t\geq 0}$ on the Hilbert space $L^{2}(\Omega)$. 
We assume that the initial state $y_{0}\in L^{2}(\Omega)$ is unknown and $C$ 
is called the observation operator, which is a linear operator, possibly unbounded, 
depending on the structure and the number $p\in \mathbb{N}$ of the considered sensors, 
with dense domain $D(C) \subseteq L^{2}(\Omega)$ and range in
the observation space $\mathcal{O} = L^{2}(0,T;\mathbb{R}^{p})$.

The history of fractional calculus goes back to more than 300 years when in 1965 
the derivative of order $q= \displaystyle \frac{1}{2}$ was discussed by Leibniz. 
Since then, several mathematicians contributed to this subject over the years. 
We can mention the works of Liouville, Riemann and Weyl, who made major improvements 
to the theory of fractional calculus. The researches continued with the 
contributions from Fourier, Abel, Leibniz, Gr\"{u}nwald, Letnikov and many others. 
During last decades, the investigation of the theory of differential equations 
of a fractional order was motived by the intensive development of the theory 
of fractional calculus. This area of research has attracted great attention 
of many mathematicians, physicists, and engineers, see, e.g., 
\cite{MR2962045,MR3443073,DS,DD,DSS,HI,MR3571004}. It is 
a very useful and valuable tool in the modeling of many real phenomena. 
Indeed, we can find numerous applications in electrochemistry, electromagnetic, 
fluid dynamics, control theory, viscoelasticity, viscoplasticity, traffic, 
economics, aerodynamics, heat conduction and continuum mechanics, see, e.g.,
\cite{AX,BG,CAMA,MR3316531,DIFR,KST,MA,MR2721980,OU,PO,SKM}.

One of the recent research topics in control theory is studying the concept 
of regional observability for fractional partial differential equations 
\cite{GCK,SP}. The authors in these works discuss the problem of 
regional observability, regional boundary observability and regional gradient 
observability of Riemann--Liouville and Caputo type time fractional diffusion systems, 
where the goal is the possibility to reconstruct the initial (state or gradient state) 
just in an interested subregion of the whole domain $\Omega$. For partial 
differential equations, several works deal with this problem. 
We refer the interested reader to \cite{AM,DR,EL,JAI,ZB} 
and references therein for more details.

In this paper, we investigate the regional enlarged observability, 
also so-called observability with constraints on the state, of fractional 
diffusion systems with Caputo fractional derivative, using the 
Hilbert Uniqueness Method (HUM) of Lions \cite{LI,JL}.

The remainder of this paper is organized as follows. Some basic knowledge 
and preliminary results, which will be used throughout the paper, are given 
in Section~\ref{sec:2}. In Section~\ref{sec:3}, we characterize 
the enlarged observability of the system.  Section~\ref{sec:4} is focused 
on the regional reconstruction of the initial state between two prescribed 
functions only in an internal subregion of the evolution domain. Finally, 
we present an example to demonstrate our main result in Section~\ref{sec:5}. 
We end with Section~\ref{sec:6} of conclusions.


\section{Preliminaries}
\label{sec:2}

In this section, we introduce some notations, definitions, 
and preliminary results, which are used in the rest of the paper.

\begin{definition}[See \cite{KST,PO}] 
\label{definition2.1}
The left-sided and the right-sided fractional integrals of order 
$q > 0$ of a function $y(x,t)$ with respect to $t \in [0,T]$ 
are given as
$$
{}_{0}I_{t}^{q}y(x,t) = \displaystyle \frac{1}{\Gamma(q)} 
\int_{0}^{t} (t-s)^{q-1} y(x,s) ds
$$
and
$$
{}_{t}I_{T}^{q}y(x,t) = \displaystyle \frac{1}{\Gamma(q)} 
\int_{t}^{T} (s-t)^{q-1} y(x,s) ds,
$$
respectively, provided the right-hand sides are pointwise defined on $[0,T]$, 
where $\Gamma (q) $ denotes Euler's Gamma function.
\end{definition}

\begin{definition}[See \cite{KST,PO}]
\label{definition2.2}
The left-sided and right-sided Caputo fractional derivatives 
of order $0< q < 1$, of a function $y(x,t)$ with respect to 
$t \in [0, T]$, are defined as 
$$
^{C}_{0}D_{t}^{q}y(x,t) = \displaystyle \frac{1}{\Gamma(1-q)} 
\int_{0}^{t} (t-s)^{-q} \frac{d}{d s}y(x,s) ds
$$
and 
$$
^{C}_{t}D_{T}^{q}y(x,t) = \displaystyle \frac{-1}{\Gamma(1-q)} 
\int_{t}^{T} (s-t)^{-q} \frac{d}{d s}y(x,s) ds,
$$
respectively.
\end{definition}

\begin{lemma}[See \cite{ZJ}]
\label{lemma2.1}
For $t \in [0,T]$, any $y_{0} \in L^{2}(\Omega)$ and $0 < q < 1$, we say that 
the function $y \in L^{2}(0,T;L^{2}(\Omega))$ is a mild 
solution of system \eqref{se11}, denoted by $y(x,\cdot)$, if it satisfies
$$
y(x,t) =  S_{q}(t)y_{0},
$$	
where
$$
S_{q}(t) =  \int_{0}^{\infty} 
 \xi_{q} (\theta) R(t^{q}\theta) d\theta,
$$
$$
\xi_{q} (\theta) = \displaystyle \frac{1}{q} 
\theta^{-1-\frac{1}{q}}\varpi_{q}(\theta^{-\frac{1}{q}}),
$$
$$
\varpi_{q}(\theta)  = \displaystyle \frac{1}{\pi} 
\sum_{n=1}^{\infty} (-1)^{n-1} \theta^{-nq-1} 
\frac{\Gamma(nq+1)}{n!} \sin(n\pi q), 
\quad \theta \in(0,\infty).
$$
\end{lemma}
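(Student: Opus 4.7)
The plan is to use the Laplace transform in $t$ to pass from the Caputo equation to an algebraic resolvent identity, then invert via the Mainardi--Wright one-sided stable density. First I would apply $\mathcal{L}\{\cdot\}(\lambda)$ to ${}^{C}_{0}D_{t}^{q}y = Ay$ with $y(x,0)=y_{0}$, using the standard formula
\[
\mathcal{L}\bigl\{{}^{C}_{0}D_{t}^{q}y\bigr\}(\lambda) = \lambda^{q}\,Y(\lambda) - \lambda^{q-1}y_{0},
\]
giving $(\lambda^{q}I-A)Y(\lambda) = \lambda^{q-1}y_{0}$. Since $A$ generates the $C_{0}$-semigroup $(R(t))_{t\ge 0}$ on $L^{2}(\Omega)$, the resolvent admits the Laplace representation $(\lambda^{q}I-A)^{-1} = \int_{0}^{\infty}e^{-\lambda^{q}s}R(s)\,ds$ for $\lambda$ in the appropriate half-plane, so
\[
Y(\lambda) = \lambda^{q-1}\int_{0}^{\infty}e^{-\lambda^{q}s}R(s)\,y_{0}\,ds.
\]

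Next I would bring in the Wright-type function $\varpi_{q}$ (or equivalently $\xi_{q}$) defined in the statement. The key identity to establish is
\[
\int_{0}^{\infty}\xi_{q}(\theta)\,e^{-\lambda t^{q}\theta}\,d\theta
= E_{q}(-\lambda t^{q})\quad \text{(scalar case)},
\]
together with the companion relation $\lambda^{q-1}e^{-s\lambda^{q}} = \mathcal{L}\{t \mapsto \int_{0}^{\infty}\xi_{q}(\theta)\,\delta_{s/t^{q}}\text{-type weight}\}$; concretely, the change of variables $\theta = (t/r)^{q}$ in the definition of $\xi_{q}$ yields
\[
\lambda^{q-1}e^{-s\lambda^{q}}
= \int_{0}^{\infty}\mathcal{L}\bigl\{\xi_{q}(\theta)\chi_{\{t^{q}\theta = s\}}\bigr\}(\lambda)\,d\theta,
\]
which, after a careful substitution, matches the integrand of $Y(\lambda)$. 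Plugging this into the formula for $Y(\lambda)$, inverting the Laplace transform and exchanging the order of integration (justified by Fubini together with the known decay of $\xi_{q}$ and boundedness of $R(\cdot)y_{0}$ on compacts) produces
\[
y(x,t) = \int_{0}^{\infty}\xi_{q}(\theta)\,R(t^{q}\theta)\,y_{0}\,d\theta = S_{q}(t)y_{0}.
\]

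Finally, I would verify membership in $L^{2}(0,T;L^{2}(\Omega))$: using $\|R(t^{q}\theta)y_{0}\|_{L^{2}} \le Me^{\omega t^{q}\theta}\|y_{0}\|_{L^{2}}$ together with the normalization $\int_{0}^{\infty}\xi_{q}(\theta)\,d\theta = 1$ and the moment bound $\int_{0}^{\infty}\theta^{\alpha}\xi_{q}(\theta)\,d\theta = \Gamma(1+\alpha)/\Gamma(1+q\alpha)$, one gets $\|S_{q}(t)y_{0}\|_{L^{2}} \le M_{T}\|y_{0}\|_{L^{2}}$ uniformly in $t\in[0,T]$, so $y\in C([0,T];L^{2}(\Omega)) \subset L^{2}(0,T;L^{2}(\Omega))$.

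The main obstacle will be the Wright-function bookkeeping: recognizing that the two seemingly different expressions $\xi_{q}$ and $\varpi_{q}$ are related by $\xi_{q}(\theta) = q^{-1}\theta^{-1-1/q}\varpi_{q}(\theta^{-1/q})$ exactly so that $\varpi_{q}$ is a one-sided $q$-stable density with Laplace transform $\int_{0}^{\infty}\varpi_{q}(\theta)e^{-\lambda\theta}d\theta = E_{q}(-\lambda)$, and justifying the interchanges of integration and Laplace inversion in the (possibly) unbounded-$A$ setting. Once those technicalities are in place, the representation $y(t)=S_{q}(t)y_{0}$ drops out, and checking that it indeed satisfies \eqref{se11} pointwise (in the mild sense, i.e.\ after another Laplace transform reconstructs $Y(\lambda)$) closes the argument.
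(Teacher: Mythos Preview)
The paper does not prove this lemma at all; it is quoted directly from Zhou--Jiao \cite{ZJ} and stated as a definition of mild solution. Your Laplace-transform strategy is precisely the one used in that reference, so in that sense your outline matches the original source.

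That said, there is a concrete error in your bookkeeping that you should fix before the argument can go through. You write at the end that $\varpi_{q}$ is the one-sided $q$-stable density with
\[
\int_{0}^{\infty}\varpi_{q}(\theta)\,e^{-\lambda\theta}\,d\theta = E_{q}(-\lambda).
\]
This is not correct: the Laplace transform of the extremal stable density $\varpi_{q}$ is $e^{-\lambda^{q}}$, not the Mittag--Leffler function. It is $\xi_{q}$ (the Mainardi/Wright $M$-function) whose Laplace transform is $E_{q}(-\lambda)$, which you state correctly earlier. The distinction matters because the whole point of introducing $\varpi_{q}$ in the derivation is to match the factor $e^{-\lambda^{q}s}$ coming from the semigroup resolvent $(\lambda^{q}I-A)^{-1}=\int_{0}^{\infty}e^{-\lambda^{q}s}R(s)\,ds$; once you write $e^{-\lambda^{q}s}=\int_{0}^{\infty}e^{-\lambda s^{1/q}\tau}\varpi_{q}(\tau)\,d\tau$ (after rescaling), the change of variables $\theta=(s/t^{q})$ together with $\xi_{q}(\theta)=q^{-1}\theta^{-1-1/q}\varpi_{q}(\theta^{-1/q})$ produces the claimed $S_{q}(t)$. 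Your heuristic display involving a ``$\delta_{s/t^{q}}$-type weight'' is too vague to stand in for this step; the actual substitution is straightforward once the correct Laplace identity for $\varpi_{q}$ is in place.

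The final $L^{2}(0,T;L^{2}(\Omega))$ estimate is fine as written.
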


\begin{remark}[See \cite{MPG}]
\label{remark2.1}
Let $\xi_{q}$ represent the probability density function defined 
on $(0,\infty)$, satisfying 
$$
\xi_{q}(\theta) \geq 0, \quad \theta \in (0,\infty)  
\quad \hbox{and} \quad \int_{0}^{\infty} \xi_{q}(\theta) d\theta = 1.
$$
Then,
$$
\int_{0}^{\infty} \theta^{\nu} \xi_{q}(\theta) d\theta 
= \frac{\Gamma(1+\nu)}{\Gamma(1+q \nu)}, \quad \nu \geq 0.
$$
\end{remark}

Note that the output function \eqref{se12} can be written as
$$
\begin{array}{rll}
z(t)& = & C S_{q}(t) y_{0}\\
    & = & K_{q} (t) y_{0},
\end{array}
$$
where $K_{q} : L^{2}(\Omega) \longrightarrow \mathcal{O}$ is a linear operator. 
To obtain the adjoint operator of $K_{q}$, we have two cases, depending 
on the notions of admissibility of the observation operator $C$. 
\begin{description}
\item[Case 1] $C$ is bounded (i.e., zone sensors).
Let $C : L^{2}(\Omega) \longrightarrow \mathcal{O}$ and 
$C^{*}$ be its adjoint. We get that the adjoint operator 
of $K_{q}$ is given by
$$ 
\begin{array}{rll}
K_{q}^{*} : \mathcal{O} & \longrightarrow &  L^{2}(\Omega)\\
z^{*} & \longmapsto& \displaystyle \int_{0}^{T} S_{q}^{*}(s)C^{*}z^{*}(s)ds.
\end{array}
$$

\item[Case 2] $C$ is unbounded (i.e., pointwise sensors -- see Definition~\ref{definition2.8}).
In this case, we have $ C : D(C) \subseteq L^{2}(\Omega) \longrightarrow \mathcal{O}$ 
with $C^{*}$ denoting its adjoint. In order to give a sense to \eqref{se12}, 
we make the assumption that $C$ is an admissible observation operator 
in the sense of the following definition.
\end{description}

\begin{definition}
\label{definition2.5}
The operator $C$ of system \eqref{se11}--\eqref{se12} is 
an admissible observation operator if there exists 
a constant $M > 0$ such that
\begin{equation*}
\displaystyle \int_{0}^{T} \left\| CS_{q}(s)y_{0} \right\|^{2} ds 
\leq M \left\| y_{0} \right\|^{2}
\end{equation*}
for any $y_{0} \in D(C)$.
\end{definition}

Note that the admissibility of $C$ guarantees that we can extend the mapping 
$$
y_{0} \longmapsto CS_{q}(t)y_{0} = K_{q}(t)y_{0}
$$ 
to a bounded linear operator from 
$L^{2}(\Omega)$ to $\mathcal{O}$. For more details, see, e.g., \cite{PW,SA,WE}. 
Then the adjoint of the operator $K_{q}$ can be defined as
$$ 
\begin{array}{rll}
K_{q}^{*} :  D(K_{q}^{*}) \subseteq \mathcal{O} 
& \longrightarrow &  L^{2}(\Omega)\\
z^{*} & \longmapsto& 
\displaystyle \int_{0}^{T} S_{q}^{*}(s)C^{*}z^{*}(s)ds.
\end{array}
$$ 
Next we introduce the notion of sensors.

\begin{definition}[See \cite{EL}]
\label{definition2.6}
A sensor is a couple defined by $(D,f)$, where $D$ is a nonempty closed 
part of $\overline{\Omega}$ representing the geometric support of the sensor, 
and $f$ define the spatial distribution of the information on the support $D$. 
Then the output function \eqref{se12} can be written in the form
\begin{equation}
\label{se13}
z(t) = \displaystyle \int_{D} y(x,t)f(x) dx.
\end{equation}           
\end{definition}

\begin{definition}[See \cite{EL}]
\label{definition2.8}
A sensor may be pointwise (internal or boundary) if $D = \{b\}$ with 
$b \in \overline{\Omega}$ and $f = \delta(b - \cdot)$, where $\delta$ is 
the Dirac mass concentrated in $b$, and the sensor is then denoted by 
$(b,\delta_{b})$. In this case, the operator $C$ is unbounded and 
the output function \eqref{se12} can be written in the form
\begin{equation}
\label{se14}
z(t) = y(b,t).
\end{equation} 
\end{definition}
In order to prove our results, the following lemmas are used.

\begin{lemma}[See \cite{MAG}]
\label{lemma2.8}
Let the reflection operator $\mathcal{Q}$ on the interval $[0, T]$ be defined by
$$
\mathcal{Q} f(t) := f(T-t).
$$
Then the following relations hold:
$$
\mathcal{Q}\, {}^{C}_{0}D_{t}^{q} f(t)
= {}^{C}_{t}D_{T}^{q} \mathcal{Q} f(t), 
\qquad \mathcal{Q} 	{}_{0}I_{t}^{q}   
f(t) = {}_{t}I_{T}^{q} \mathcal{Q} f(t) 
$$
and 
$$
{}^{C}_{0}D_{t}^{q}\mathcal{Q} f(t) 
= \mathcal{Q} \, {}^{C}_{t}D_{T}^{q} f(t), 
\qquad {}_{0}I_{t}^{q} \mathcal{Q} f(t)
= \mathcal{Q}   {}_{t}I_{T}^{q}   f(t).
$$
\end{lemma}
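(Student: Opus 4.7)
The plan is to verify each identity by substituting directly into the definitions from Definitions \ref{definition2.1} and \ref{definition2.2} and performing the change of variables $s \mapsto T-s$. I would first handle the two integral identities, since they involve no differentiation, and then use essentially the same substitution for the Caputo identities while keeping careful track of the minus sign that comes from differentiating $\mathcal{Q} f(s) = f(T-s)$.

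For the integral identity $\mathcal{Q}\,{}_{0}I_{t}^{q} f(t) = {}_{t}I_{T}^{q} \mathcal{Q} f(t)$, I would start from the definition
$$
\mathcal{Q}\,{}_{0}I_{t}^{q} f(t)
= \frac{1}{\Gamma(q)} \int_0^{T-t} (T-t-s)^{q-1} f(s)\, ds,
$$
and apply the substitution $u = T - s$, which sends $s=0$ to $u=T$, sends $s=T-t$ to $u=t$, and turns $T-t-s$ into $u-t$. Reversing the limits then yields
$$
\frac{1}{\Gamma(q)} \int_t^T (u-t)^{q-1} f(T-u)\, du
= {}_{t}I_{T}^{q} \mathcal{Q} f(t),
$$
which is the claim. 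The second integral identity follows by applying $\mathcal{Q}$ to both sides and using $\mathcal{Q}^2 = \mathrm{id}$.

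For the Caputo identity ${}^{C}_{0}D_{t}^{q} \mathcal{Q} f(t) = \mathcal{Q}\,{}^{C}_{t}D_{T}^{q} f(t)$, I would begin with
$$
{}^{C}_{0}D_{t}^{q} \mathcal{Q} f(t)
= \frac{1}{\Gamma(1-q)} \int_0^t (t-s)^{-q} \frac{d}{ds} f(T-s)\, ds,
$$
and note that the chain rule gives $\frac{d}{ds} f(T-s) = -f'(T-s)$, producing an overall minus sign. The substitution $u = T-s$ then transforms the integral, after reversing the limits, into
$$
-\frac{1}{\Gamma(1-q)} \int_{T-t}^T \bigl(u-(T-t)\bigr)^{-q} f'(u)\, du,
$$
which is exactly the definition of ${}^{C}_{t'}D_{T}^{q} f$ evaluated at $t' = T-t$, i.e. $\mathcal{Q}\,{}^{C}_{t}D_{T}^{q} f(t)$. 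The minus sign built into the definition of the right-sided Caputo derivative is what makes this work. The companion identity follows once again by applying $\mathcal{Q}$ to both sides.

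The whole argument is essentially a careful substitution, so there is no conceptual obstacle. The only real difficulty is bookkeeping: tracking the two minus signs (one from the chain rule, one from the definition of ${}^{C}_{t}D_{T}^{q}$) and correctly reversing the orientation of the integration interval when changing variables. Once these are handled, the identities follow by matching each resulting integral with its definition.
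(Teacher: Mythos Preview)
Your argument is correct: the change of variables $u = T - s$ together with the chain rule and the sign convention in the right-sided Caputo derivative yields each identity, and $\mathcal{Q}^2 = \mathrm{id}$ gives the companion relations. Note, however, that the paper does not supply its own proof of this lemma; it is quoted from \cite{MAG} without argument. Your direct verification is the standard proof of this fact, so there is nothing to compare against in the paper itself.
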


\begin{lemma}[See \cite{SP}]
\label{lemma2.9}
For any $q \in (0,1)$, $t \in [0,T]$, let
$$
H f(t) = \displaystyle \sum_{i=1}^{\infty} \int_{0}^{t} \frac{E_{q,q} 
(\lambda_{i}(t-s)^{q})}{(t-s)^{1-q}}(\varphi_{_{i}},Bf(s))
ds \varphi_{_{i}}(x), \quad f \in L^{2}(0,T;\mathbb{R}^{p}).
$$
Then, 
$$
{}_{0}I_{t}^{1-q} H f(t) = \displaystyle \sum_{i=1}^{\infty} 
\int_{0}^{t} E_{q}(\lambda_{i}(t-s)^{q})(\varphi_{_{i}},Bf(s))ds\varphi_{_{i}}(x),
$$
where $(\varphi_{_{i}})_{i\in \mathbb{N}}$ are the eigenfunctions 
of the operator $A$ in $L^{2}(\Omega)$.
\end{lemma}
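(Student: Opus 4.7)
The plan is to apply the definition of the Riemann--Liouville integral ${}_0 I_t^{1-q}$ directly to $Hf(t)$, interchange the order of summation and integration, use Fubini to swap the two time integrals, and then recognize the resulting inner integral as an instance of the classical Mittag--Leffler identity
\begin{equation*}
{}_0 I_t^{1-q}\bigl[\,t^{q-1} E_{q,q}(\lambda t^q)\bigr] = E_q(\lambda t^q).
\end{equation*}
Once this identity is in hand, the conclusion falls out immediately.

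In more detail, first I would write
\begin{equation*}
{}_0 I_t^{1-q} Hf(t) = \frac{1}{\Gamma(1-q)} \int_0^t (t-\tau)^{-q} Hf(\tau)\, d\tau,
\end{equation*}
substitute the series defining $Hf(\tau)$, and pull the sum outside. Then, appealing to Fubini on the region $\{(\tau,s) : 0 \le s \le \tau \le t\}$, I would exchange the $\tau$ and $s$ integrals to obtain, for each $i$, an inner integral of the form
\begin{equation*}
\frac{1}{\Gamma(1-q)} \int_s^t (t-\tau)^{-q} (\tau - s)^{q-1} E_{q,q}\bigl(\lambda_i (\tau-s)^q\bigr)\, d\tau.
\end{equation*}
A change of variable $u = \tau - s$ turns this into ${}_0 I_{r}^{1-q}\bigl[u^{q-1} E_{q,q}(\lambda_i u^q)\bigr]$ evaluated at $r = t-s$.

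The identity above is then established by termwise integration of the series $E_{q,q}(z) = \sum_{n \ge 0} z^n/\Gamma(qn+q)$: using ${}_0 I_t^{1-q} t^{q(n+1)-1} = \Gamma(qn+q) t^{qn}/\Gamma(qn+1)$, the factors $\Gamma(qn+q)$ cancel and one recovers exactly $E_q(\lambda t^q) = \sum_{n \ge 0} (\lambda t^q)^n/\Gamma(qn+1)$. Substituting this back yields the claimed formula
\begin{equation*}
{}_0 I_t^{1-q} Hf(t) = \sum_{i=1}^{\infty} \int_0^t E_q\bigl(\lambda_i (t-s)^q\bigr) (\varphi_i, Bf(s))\, ds\, \varphi_i(x).
\end{equation*}

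The main technical obstacle is justifying the three formal manipulations: the termwise integration used to prove the Mittag--Leffler identity (absolute convergence on $[0,T]$ since $\sum \lambda^n t^{qn}/\Gamma(qn+q)$ is an entire function of $t^q$), the exchange of summation and integration, and the Fubini swap. These require uniform $L^2$-type bounds on the tail of the series; for the relevant setting (the eigenvalues $\lambda_i$ of $A$ are real and nonpositive, so $|E_{q,q}(\lambda_i u^q)|$ is uniformly bounded, and the Parseval coefficients $(\varphi_i, Bf(s))$ are square-summable in $i$ for a.e. $s$), these are routine, but they are the only subtle points in an otherwise computational argument.
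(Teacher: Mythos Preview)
Your argument is correct and is the standard route: apply ${}_0 I_t^{1-q}$, swap the sum and the two time integrals via Fubini, and reduce to the scalar identity ${}_0 I_t^{1-q}\bigl[t^{q-1}E_{q,q}(\lambda t^{q})\bigr]=E_q(\lambda t^{q})$, which follows from termwise integration of the power series. The paper does not supply its own proof of this lemma---it simply cites \cite{SP}---so there is nothing to compare; your write-up is precisely the computation one finds in that reference, and the convergence justifications you flag (boundedness of $E_{q,q}(\lambda_i u^{q})$ for $\lambda_i\le 0$, Parseval for the coefficients) are the right ones.
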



\section{Enlarged observability and characterization}
\label{sec:3}

Let $\omega \subseteq \Omega$ be a given subregion with a positive Lebesgue measure. 
We define the restriction operator $\chi_{_{\omega}}$ 
and its adjoint $\chi_{_{\omega}}^{*}$ by
\begin{equation*}
\begin{array}{rll}
\chi_{\omega} : L^{2}(\Omega)& \longrightarrow &  L^{2}(\omega)\\
y & \longrightarrow& \chi_{_{\omega}}y = y_{|\omega} \\
\end{array}
\end{equation*}
and
$$
(\chi_{_{\omega}}^{*}y)(x) 
= 
\begin{cases}
y(x) & \text{ if } \ x \in \omega\\
0 & \text{ if } \ x \in \Omega\backslash\omega.
\end{cases}
$$
From \cite{CZW,DR,PW}, we get that a necessary 
and sufficient condition for the regional 
exact observability of system  \eqref{se11} 
augmented with \eqref{se12} in $\omega$ at time $t$ 
is given by $Im(\chi_{_{\omega}}K_{q}^{*}) = L^{2}(\omega)$.
                                                              
Let $\alpha(\cdot)$ and $\beta(\cdot)$ be two functions defined 
in $L^{2}(\omega)$ such that $\alpha(\cdot) \leq \beta(\cdot)$ 
a.e. in $\omega$. Throughout the paper, we set
\begin{equation*}
\mathcal{E} := \left\{ y\in L^{2}(\omega) \; | \; 
\alpha(\cdot) \leq y(\cdot) \leq \beta(\cdot) \quad \hbox{a.e.}\; 
\hbox{in} \; \omega \; \right\}.
\end{equation*}
We consider
\begin{equation*}
y_{0} = 
\begin{cases}
y_{0}^{1}  & \hbox{in } \; \mathcal{E} \\
y_{0}^{2} & \hbox{in } \; L^{2}(\Omega) \backslash \mathcal{E}.    
\end{cases}
\end{equation*}
The study of regional enlarged observability for Caputo time fractional 
order diffusion systems amounts to solve the following problem.

\begin{problem}
Given the system \eqref{se11} together with the output \eqref{se12} in $\omega$ 
at time $t \in [0,T] $, is it possible to reconstruct $y_{0}^{1}$ between 
two prescribed functions $\alpha(\cdot)$ and $\beta(\cdot)$ in $\omega$?
\end{problem}

Before proving our first result, we need two important definitions.

\begin{definition}
\label{definition3.10}
The system \eqref{se11} together with the output 
\eqref{se12} is said to be exactly 
$\mathcal{E}$-observable in $\omega$ if 
\begin{equation*}
Im (\chi_{_{\omega}} K_{q}^{*}) 
\cap \mathcal{E} \neq \emptyset.
\end{equation*}
\end{definition}

\begin{definition}
\label{definition3.11}
The sensor $(D,f)$ is said to be exactly $\mathcal{E}$-strategic 
in $\omega$ if the observed system is exactly 
$\mathcal{E}$-observable in $\omega$.
\end{definition}

\begin{remark}
If the system \eqref{se11} together with the output \eqref{se12} is exactly 
$\mathcal{E}$-observable in $\omega_{1}$, then it is 
exactly $\mathcal{E}$-observable in any subregion 
$\omega_{2} \subset \omega_{1}$.
\end{remark}

\begin{theorem}
\label{Proposition3.13}
The following two statements are equivalent:
\begin{enumerate}
\item[\textbf{1.}] The system \eqref{se11} together 
with the output \eqref{se12} is exactly 
$\mathcal{E}$-observable in $\omega$.

\item[\textbf{2.}] $Ker (K_{q} \chi_{_{\omega}}^{*}) 
\cap \mathcal{E} = \{0\}$.
\end{enumerate}
\end{theorem}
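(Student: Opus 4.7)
The core tool is the standard Hilbert-space duality $Ker(T^*) = Im(T)^{\perp}$, applied to $T := \chi_{_{\omega}} K_q^*$, so that $T^* = K_q \chi_{_{\omega}}^*$. Both implications should flow from this identity together with the defining bounds of $\mathcal{E}$.

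For $(1) \Rightarrow (2)$, I would fix $z^{*} \in \mathcal{O}$ witnessing (1), so that $y^{*} := \chi_{_{\omega}} K_{q}^{*} z^{*} \in \mathcal{E}$. For any candidate $y \in Ker(K_{q} \chi_{_{\omega}}^{*}) \cap \mathcal{E}$, the adjoint identity yields
$$
\langle y, y^{*} \rangle_{L^{2}(\omega)}
= \langle \chi_{_{\omega}}^{*} y, K_{q}^{*} z^{*} \rangle_{L^{2}(\Omega)}
= \langle K_{q} \chi_{_{\omega}}^{*} y, z^{*} \rangle_{\mathcal{O}} = 0,
$$
and I would then try to conclude $y = 0$ by combining this orthogonality with the pointwise bounds $\alpha \le y, y^{*} \le \beta$ a.e.\ in $\omega$ defining $\mathcal{E}$. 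For $(2) \Rightarrow (1)$, I would argue by contraposition: if $Im(T) \cap \mathcal{E} = \emptyset$, the geometric Hahn--Banach theorem (applicable because $Im(T)$ is a linear subspace and $\mathcal{E}$ is a closed convex subset of $L^{2}(\omega)$) produces $f \in L^{2}(\omega) \setminus \{0\}$ with $f \perp Im(T)$, hence $f \in Ker(T^{*})$, and with $\langle f, \cdot\rangle$ of definite sign on $\mathcal{E}$. From this sign condition, together with the order-interval structure of $\mathcal{E}$, one should extract a nonzero element of $Ker(T^{*}) \cap \mathcal{E}$, contradicting (2).

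The main obstacle I anticipate, in both implications, is translating between the \emph{linear} orthogonality supplied by the adjoint duality and the \emph{order-theoretic} pointwise constraints defining $\mathcal{E}$. Orthogonality of $y$ to a single $y^{*} \in \mathcal{E}$ does not in general force $y = 0$, and a sign condition against $\mathcal{E}$ does not in general yield membership in $\mathcal{E}$, so the closedness, convexity, and order-interval structure of $\mathcal{E}$ (built from the bounds $\alpha \le \beta$) will have to do the essential work.
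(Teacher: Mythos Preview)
Your strategy is the same as the paper's: both rest on the duality $Ker(K_q\chi_{\omega}^{*}) = Im(\chi_{\omega} K_q^{*})^{\perp}$. The difference is that you explicitly flag the passage from this linear orthogonality to the order-interval constraint $\alpha\le\cdot\le\beta$ as the crux, whereas the paper glosses over it --- and the paper's argument is in fact defective at exactly that point.

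For $(1)\Rightarrow(2)$ the paper argues by contradiction: it takes $y\neq 0$ in $Ker(K_q\chi_{\omega}^{*})\cap\mathcal{E}$, notes $y\in Im(\chi_{\omega} K_q^{*})^{\perp}$, hence $y\notin Im(\chi_{\omega} K_q^{*})$, and then asserts $Im(\chi_{\omega} K_q^{*})\subset L^{2}(\omega)\setminus\mathcal{E}$. That last inclusion is a non sequitur: from a single $y\in\mathcal{E}\setminus Im(\chi_{\omega} K_q^{*})$ one cannot infer that $Im(\chi_{\omega} K_q^{*})$ misses all of $\mathcal{E}$. For $(2)\Rightarrow(1)$ the paper merely observes that the unique element of $Ker(K_q\chi_{\omega}^{*})\cap\mathcal{E}$ is $0$, and since $0\in Im(\chi_{\omega} K_q^{*})$ always, the intersection in $(1)$ is nonempty. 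But this uses only $0\in\mathcal{E}$ (i.e.\ $\alpha\le 0\le\beta$, which is not assumed), not hypothesis $(2)$, and would make $(1)$ hold trivially whenever $0\in\mathcal{E}$.

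The obstacle you identify is therefore genuine, and neither your Hahn--Banach route nor the paper's set manipulations closes it. Indeed the equivalence fails as stated: in a two-dimensional model with $\alpha=(-1,-1)$, $\beta=(1,1)$ and $Im(\chi_{\omega} K_q^{*})=\mathbb{R}\times\{0\}$, one has $Im(\chi_{\omega} K_q^{*})\cap\mathcal{E}\neq\emptyset$ while $Ker(K_q\chi_{\omega}^{*})\cap\mathcal{E}=\{0\}\times[-1,1]\neq\{0\}$. Your sketch cannot be completed into a proof of the theorem as written; an additional hypothesis linking $\mathcal{E}$ to the operator, or a corrected formulation, is required.
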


\begin{proof}
We begin by proving that statement 1 implies 2.
For that we show that
$$
\begin{array}{rll}
Im (\chi_{_{\omega}} K_{q}^{*}) 
\cap \mathcal{E} \neq \emptyset
&\Longrightarrow
& Ker (K_{q} \chi_{_{\omega}}^{*}) 
\cap \mathcal{E}  = \{0\}.
\end{array}
$$
Suppose that
$$
Ker (K_{q} \chi_{_{\omega}}^{*}) 
\cap \mathcal{E} \neq \{0\}.
$$
Let us consider $y \in Ker (K_{q} \chi_{_{\omega}}^{*}) \cap\mathcal{E}$ 
such that $y \neq 0$. Then, $y \in Ker (K_{q} \chi_{_{\omega}}^{*}) $ and 
$y \in \mathcal{E}$.
We have $Ker (K_{q} \chi_{_{\omega}}^{*}) = Im(\chi_{_{\omega}} K_{q}^{*})^{\perp}$, 
so that $y \in Im(\chi_{_{\omega}} K_{q}^{*})^{\perp}$, $y \neq 0$. 
Therefore, $y \notin Im(\chi_{_{\omega}} K_{q}^{*})$, and 		
\begin{gather*}
Ker (K_{q} \chi_{_{\omega}}^{*}) \cap\mathcal{E} 
\subset \displaystyle L^{2}(\omega)\setminus Im(\chi_{_{\omega}} K_{q}^{*}),\\
Im(\chi_{_{\omega}} K_{q}^{*})\subset \displaystyle 
\left[ L^{2}(\omega)\setminus Ker (K_{q} \chi_{_{\omega}}^{*})\right]  
\cup \displaystyle \left[ L^{2}(\omega)\setminus \mathcal{E} \,\right].
\end{gather*}
We have 
$$
Im(\chi_{_{\omega}} K_{q}^{*}) \subset \displaystyle L^{2}(\omega) 
\setminus Ker (K_{q} \chi_{_{\omega}}^{*}).
$$
Accordingly,
$$
Im(\chi_{_{\omega}} K_{q}^{*}) 
\cap Ker (K_{q} \chi_{_{\omega}}^{*}) = \emptyset
$$
and
$$
Im(\chi_{_{\omega}} K_{q}^{*}) \cap Im(\chi_{_{\omega}} 
K_{q}^{*})^{\perp} = \emptyset,
$$ 
which is absurd. Since
$$
Im(\chi_{_{\omega}} K_{q}^{*}) \subset \displaystyle 
\displaystyle  L^{2}(\omega)\setminus \mathcal{E},
$$
it follows that
$$
Im(\chi_{_{\omega}} K_{q}^{*}) \cap \mathcal{E}  
= \emptyset,
$$ 
which is also absurd. Consequently, 
$$
Ker (K_{q} \chi_{_{\omega}}^{*}) \cap \mathcal{E} = \{0\}.
$$
We now prove the reverse implication: statement 2 implies 1. 
For that we show that
$$
\begin{array}{rll}
Ker (K_{q} \chi_{_{\omega}}^{*}) 
\cap \mathcal{E} = \{0\}
&\Longrightarrow& Im(\chi_{_{\omega}} K_{q}^{*}) 
\cap \mathcal{E} \neq \emptyset.
\end{array}
$$
Suppose that
$$
Ker (K_{q} \chi_{_{\omega}}^{*}) 
\cap \mathcal{E} = \{0\}.
$$
Let us consider 
$$
y \in Ker (K_{q} \chi_{_{\omega}}^{*})  \cap\mathcal{E}.
$$
Then, $y \in Ker (K_{q} \chi_{_{\omega}}^{*})$ and 
$y \in \mathcal{E}$ such that $y = 0$.
We have 
$$
Ker (K_{q} \chi_{_{\omega}}^{*}) = Im(\chi_{_{\omega}} 
K_{q}^{*})^{\perp}, 
$$
so $y \in Im(\chi_{_{\omega}} K_{q}^{*})^{\perp}$ such that $y = 0$. Hence, 
$$
y \in Im(\chi_{_{\omega}} K_{q}^{*})\; \hbox{and}\; y \in \mathcal{E},
$$ 
and
$$
Im(\chi_{_{\omega}} K_{q}^{*}) \cap \mathcal{E}  \neq \emptyset,
$$
which shows that \eqref{se11}--\eqref{se12} is exactly 
$\mathcal{E}$-observable in $\omega$.
\end{proof}

\begin{remark}
There exist systems that are not observable in the whole domain but exactly 
$\mathcal{E}$-observable in some region. This is illustrated 
by the following example (see Proposition~\ref{prop:15}).
\end{remark}

\begin{example}
\label{example1}
Let us consider the following one-dimensional time fractional 
differential system of order $q \in (0,1)$ in 
$\Omega_{1} = [0,1]$, excited by a pointwise sensor:
\begin{equation}
\label{se15}
\begin{cases}
^{C}_{0}D_{t}^{0.6}y(x,t)
= \displaystyle \frac{\partial^{2}} {\partial x^{2}} y(x,t)
& \hbox{in} \quad  [0,1]\times[0,T] \\
y(0,t) \;= \;y(1,t) = 0 &  \hbox{in} \quad [0,T]\\ 
y(x,0)  = y_{0}(x) &  \hbox{in} \quad [0,1],
\end{cases}
\end{equation}
augmented with the output function
\begin{equation}
\label{se16}
z(t) = C y(x,t) = y(b,t),
\end{equation}
where $b= \displaystyle \frac{1}{3} \in \Omega_{1}$.
The operator $A = \displaystyle \frac{\partial^{2}}{\partial x^{2}}$ 
has a complete set of eigenfunctions $(\varphi_{_{i}})$ in $L^{2}(\Omega_{1})$ 
associated with the eigenvalues $(\lambda_{i})$, given by
$$
\varphi_{_{i}}(x) = \sqrt{2} \sin(i \pi x) 
\quad \hbox{and} \quad \lambda_{i} = -i^{2}\pi^{2}
$$
with
$$
R(t)y(x) = \sum_{i=1}^{\infty} e^{\lambda_{i} t} \left\langle y, 
\varphi_{_{i}} \right\rangle_{_{L^{2}(\Omega_{1})}} \varphi_{_{i}}(x).
$$
Then,
$$
S_{0.6}(t)y(x) = \sum_{i=1}^{\infty} 
E_{0.6} (\lambda_{i} t^{0.6})\left\langle y, 
\varphi_{_{i}} \right\rangle_{_{L^{2}(\Omega_{1})}} \varphi_{_{i}}(x),
$$
where $E_{q}(z) := \displaystyle 
\sum_{i=0}^{\infty}\frac{z^{i}}{\Gamma(q i + 1)}$, 
\textbf{Re} $q>0$, $ z \in \mathbb{C}$, 
is the generalized  Mittag-Leffler function in one parameter (see, e.g., \cite{GO}).

Let $y_{0}(x) = \sin (2\pi x)$ be the initial state to be observed. Then, 
for $\omega_{1} = \displaystyle \left[\frac{1}{4}, \frac{1}{2}\right]$, 
the following result holds.

\begin{proposition}
\label{prop:15}
There is a state for which the system \eqref{se15}--\eqref{se16} 
is not weakly observable in $\Omega_{1}$ but it is exactly 
$\mathcal{E}_{1}$-observable in $\omega_{1}$.
\end{proposition}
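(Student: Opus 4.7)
The plan is to treat the two assertions separately.

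For the non-observability part, I would exhibit a nonzero element in $\operatorname{Ker}(K_{0.6})$. The natural candidate is $\varphi_3$: the spectral expansion of $S_{0.6}(t)\varphi_3$ collapses to a single term, and since $\varphi_3(1/3)=\sqrt{2}\sin(\pi)=0$, the output $K_{0.6}(t)\varphi_3 = E_{0.6}(\lambda_3 t^{0.6})\varphi_3(1/3)$ vanishes identically in $t$. Consequently, every state of the form $y_0+c\,\varphi_{3k}$ produces the same observation as $y_0(x)=\sin(2\pi x)$, so $y_0$ cannot be reconstructed on all of $\Omega_1$ from the sensor data.

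For the $\mathcal{E}_1$-observability part, the strategy is to invoke Theorem~\ref{Proposition3.13}, which reduces the claim to $\operatorname{Ker}(K_{0.6}\chi_{\omega_1}^{*})\cap\mathcal{E}_1=\{0\}$. For $y\in L^{2}(\omega_1)$ the spectral formula for $S_{0.6}$ gives
$$
K_{0.6}(t)\chi_{\omega_1}^{*}y
=\sum_{i=1}^{\infty}E_{0.6}(\lambda_i t^{0.6})\,\varphi_i(1/3)\,\bigl\langle y,\varphi_i\bigr\rangle_{L^{2}(\omega_1)}.
$$
Since $\{E_{0.6}(\lambda_i t^{0.6})\}_{i\geq 1}$ is linearly independent on $[0,T]$ (the $\lambda_i=-i^{2}\pi^{2}$ are pairwise distinct) and $\varphi_i(1/3)=\sqrt{2}\sin(i\pi/3)$ vanishes exactly when $3\mid i$, the vanishing of $K_{0.6}\chi_{\omega_1}^{*}y$ is equivalent to
$$
\int_{1/4}^{1/2}y(x)\sin(i\pi x)\,dx=0,\qquad \text{for every }i\notin 3\mathbb{N}.
$$

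The delicate step, which I would save for last, is converting this spectral vanishing, combined with the support condition $\chi_{\omega_1}^{*}y\equiv 0$ on $[0,1/4]\cup[1/2,1]$, into the pointwise conclusion $y\equiv 0$ on $\omega_1$. The key observation is that a sine-Fourier series on $[0,1]$ supported on the indices $3\mathbb{N}$ extends to an odd function on $\mathbb{R}$ that is $2/3$-periodic. Writing $u:=\chi_{\omega_1}^{*}y$ and letting $\tilde u$ be its odd $2$-periodic extension, the relation $\tilde u(x)=\tilde u(x+2/3)$ applied with $x\in[0,1/3]$ transports $u$ into $[2/3,1]\subset[0,1]\setminus\omega_1$, where $u$ must vanish; this forces $y\equiv 0$ on $[1/4,1/3]$. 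A second application for $x\in[1/3,1/2]$, together with the $2$-periodicity and oddness that reflect $[1,7/6]$ back to $[5/6,1]\subset[0,1]\setminus\omega_1$, forces $y\equiv 0$ on $[1/3,1/2]$. Hence $\operatorname{Ker}(K_{0.6}\chi_{\omega_1}^{*})=\{0\}$, which certainly intersects $\mathcal{E}_1$ only in the zero function, and Theorem~\ref{Proposition3.13} delivers the conclusion. The main obstacle is the geometric bookkeeping in this last paragraph: it is precisely the placement $b=1/3\in\omega_1=[1/4,1/2]$ that makes the $2/3$-shift interact non-trivially with $\operatorname{supp} u$ and allows the piecewise vanishing of $y$ to be propagated across $\omega_1$.
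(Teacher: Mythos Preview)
Your argument is correct and follows a genuinely different route from the paper's.

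For the failure of weak observability in $\Omega_1$, the paper tries to show that the particular state $y_0(x)=\sin(2\pi x)$ itself lies in $\operatorname{Ker}(K_{0.6})$; but since $\langle y_0,\varphi_i\rangle_{L^2(\Omega_1)}$ is nonzero only for $i=2$ and $\varphi_2(1/3)=\sqrt{2}\sin(2\pi/3)\neq 0$, that sum does not actually vanish. Your choice of $\varphi_3$ (and more generally $\varphi_{3k}$) as the kernel element is the right one, and your remark that $y_0$ and $y_0+c\,\varphi_3$ yield identical outputs is exactly what non--weak observability requires.

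For $\mathcal{E}_1$-observability in $\omega_1$, the paper only verifies that $K_{0.6}\chi_{\omega_1}^{*}\chi_{\omega_1}y_0\neq 0$ (keeping just the $i=1$ term) and that $\chi_{\omega_1}y_0\in\mathcal{E}_1$, implicitly identifying ``not in $\operatorname{Ker}(K_{0.6}\chi_{\omega_1}^{*})$'' with ``in $\operatorname{Im}(\chi_{\omega_1}K_{0.6}^{*})$''. You instead prove the stronger fact $\operatorname{Ker}(K_{0.6}\chi_{\omega_1}^{*})=\{0\}$: the linear independence of $\{E_{0.6}(\lambda_i t^{0.6})\}$ forces the sine coefficients of $u=\chi_{\omega_1}^{*}y$ to vanish off $3\mathbb{N}$, hence the odd $2$-periodic extension of $u$ is $2/3$-periodic, and the shifts $x\mapsto x+2/3$ carry $\omega_1=[1/4,1/2]$ into regions where $u$ is already known to vanish. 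This geometric step is carried out correctly in your two cases. What your approach buys is a clean application of Theorem~\ref{Proposition3.13} that works for \emph{any} admissible pair $\alpha_1\le\beta_1$, at the cost of the extra Fourier--support bookkeeping; the paper's approach is shorter but, as written, relies on computations and an implication that do not quite go through.
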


\begin{proof}
To show that system \eqref{se15}--\eqref{se16} is not weakly 
observable in $\Omega_{1}$, it is sufficient to verify that 
$y_{0} \in Ker(K_{0.6})$. We have
$$
\begin{array}{rll}
K_{0.6}\, y_{0}(x)  
& = & \displaystyle \sum_{i=1}^{\infty} 
E_{0.6} (\lambda_{i} t^{0.6})\left\langle y_{0}, 
\varphi_{_{i}} \right\rangle_{_{L^{2}(\Omega_{1})}} \varphi_{_{i}}(b)\\
& = & \displaystyle 2 \sum_{i=1}^{\infty} 
E_{0.6} (\lambda_{i} t^{0.6}) \sin\left(\frac{i \pi}{3}\right) 
\int_{0}^{1} \sin(2 \pi x) \sin(i \pi x) dx \\
& = & 0.
\end{array}
$$
Hence, $K_{0.6} y_{0}(x) = 0$. Consequently, the state $y_{0}$ 
is not weakly observable in $\Omega_{1}$.
On the other hand, one has
$$
\begin{array}{rll}
K_{0.6} \chi_{_{\omega_{1}}}^{*} \chi_{_{\omega_{1}}} y_{0}(x)  
& = & \displaystyle \sum_{i=1}^{\infty}
E_{0.6} (\lambda_{i} t^{0.6})\left\langle 
\chi_{_{\omega_{1}}}^{*} \chi_{_{\omega_{1}}}y_{0}, 
\varphi_{_{i}} \right\rangle_{_{L^{2}(\Omega_{1})}} \varphi_{_{i}}(b)\\
& = & \displaystyle \sum_{i=1}^{\infty} 
E_{0.6} (\lambda_{i} t^{0.6})\left\langle y_{0}, 
\varphi_{_{i}} \right\rangle_{_{L^{2}(\omega_{1})}} \varphi_{_{i}}(b)\\
& = & \displaystyle \sqrt{3} E_{0.6} (- \pi^{2} t^{0.6}) 
\int_{\frac{1}{4}}^{\frac{1}{2}} \sin(2 \pi x) \sin(\pi x) dx \\
& = & \displaystyle \frac{4\sqrt{3} - \sqrt{6}}{6 \pi} 
E_{0.6} (- \pi^{2} t^{0.6})\\
& \neq & 0,
\end{array}
$$
which means that the state $y_{0}$ is weakly observable in $\omega_{1}$.
Moreover, for 
$$
\alpha_{1} (x) = \left| y_{|\omega_{1}}^{0}(x) \right| - \frac{1}{2} 
< y_{|\omega_{1}}^{0}(x) 
$$ 
and 
$$
\beta_{1} (x) 
= \left| y_{|\omega_{1}}^{0}(x) \right| + \frac{1}{2} 
> y_{|\omega_{1}}^{0}(x), \quad \forall x \in \omega_{1},
$$ 
we have $\chi_{_{\omega_{1}}} y_{0}(x) \in \mathcal{E}_{1}$ 
and system \eqref{se15}--\eqref{se16} is exactly 
$\mathcal{E}_{1}$-observable in $\omega_{1}$.\\
The proof is complete.
\end{proof}
\end{example}


\section{HUM approach}
\label{sec:4}

Here we will use an extension of the Hilbert Uniqueness Method (HUM) 
introduced by Lions (see \cite{LI}) to reconstruct the initial state 
between two prescribed functions $\alpha(\cdot)$ and $\beta(\cdot)$ 
in $\omega$. In what follows, $\mathcal{G}$ is defined by
\begin{equation}
\label{se17}
\mathcal{G} = \left\{\, g \in L^{2}(\Omega) \; | \; g = 0 \quad 
\hbox{in} \; L^{2}(\Omega) \backslash \mathcal{E} \, \right\}.
\end{equation}
For $\varphi_{0} \in \mathcal{G}$, we consider the following system:
\begin{equation}
\label{se18}
\begin{cases}
^{C}_{0}D_{t}^{q}\varphi(x,t)
=  A \varphi(x,t) & \hbox{in}  \quad  Q_{T} \\
\varphi(\xi,t) = 0 &  \hbox{on} \quad \Sigma_{T}\\ 
\varphi(x,0) = \varphi_{_{0}}(x) &  \hbox{in} \quad \Omega.
\end{cases}
\end{equation}
Without loss of generality, we denote $\varphi(x, t) := \varphi(t)$. 
System \eqref{se18} admits a unique mild solution 
$\varphi \in L^{2}(0,T;L^{2}(\Omega))$ 
given by $\varphi(t) =  S_{q}(t)\varphi_{_{0}}$. Now we go
further into the state reconstruction by considering two types of sensors.


\subsection{Pointwise sensors}

In this case, the output function is given by
\begin{equation}
\label{se19}
z(t) = \varphi(b,T-t), \quad t \in [0,T],
\end{equation}
where $b\in \Omega$ denotes the given location of the sensor. 
We consider a semi-norm on $\mathcal{G}$ defined by
\begin{equation}
\label{se110}
\varphi_{_{0}} \longmapsto \| \varphi_{_{0}} \|_{\mathcal{G}}^{2} 
= \displaystyle  \int_{0}^{T} \left\| C \varphi(T-t) \right\|^{2}  dt.
\end{equation}
The following result holds.

\begin{lemma}
\label{lemma4.16}
If the system \eqref{se11} together with the output \eqref{se19} 
is exactly $\mathcal{E}$-observable in $\omega$, 
then \eqref{se110} defines a norm on $\mathcal{G}$.
\end{lemma}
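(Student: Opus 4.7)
The plan is to verify the two things a norm requires on top of a semi-norm. The expression $\|\varphi_0\|_{\mathcal{G}}^2 = \int_0^T \|C\varphi(T-t)\|^2\, dt$ is already the $L^2(0,T;\mathbb{R}^p)$ norm of the linear image $t \mapsto C S_q(T-t)\varphi_0$ of $\varphi_0$, so non-negativity, positive homogeneity, and the triangle inequality are automatic. Hence everything reduces to showing the separation axiom: if $\varphi_0 \in \mathcal{G}$ and $\|\varphi_0\|_{\mathcal{G}}=0$, then $\varphi_0 = 0$. This is where the hypothesis of exact $\mathcal{E}$-observability in $\omega$ must enter.

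To prove positive definiteness, I would first translate $\|\varphi_0\|_{\mathcal{G}}=0$ into a kernel condition. The integral vanishes exactly when $CS_q(T-t)\varphi_0 = 0$ a.e. in $[0,T]$; the change of variable $s = T-t$ then gives $CS_q(s)\varphi_0 = 0$ a.e., which is precisely $K_q\varphi_0 = 0$, i.e.\ $\varphi_0 \in \mathrm{Ker}(K_q)$. Next I would use the defining property of $\mathcal{G}$ from \eqref{se17}: since $\varphi_0$ vanishes on $\Omega\setminus\omega$ and restricts to an element of $\mathcal{E}$ on $\omega$, we may write $\varphi_0 = \chi_{\omega}^{*}(\chi_{\omega}\varphi_0)$ with $\chi_{\omega}\varphi_0 \in \mathcal{E}$. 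Substituting this factorisation into $K_q\varphi_0 = 0$ yields
\begin{equation*}
\chi_{\omega}\varphi_0 \in \mathrm{Ker}(K_q\chi_{\omega}^{*}) \cap \mathcal{E}.
\end{equation*}

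Finally I would invoke Theorem~\ref{Proposition3.13}: exact $\mathcal{E}$-observability in $\omega$ is equivalent to $\mathrm{Ker}(K_q\chi_{\omega}^{*}) \cap \mathcal{E} = \{0\}$, so $\chi_{\omega}\varphi_0 = 0$. Combined with $\varphi_0 = 0$ on $\Omega\setminus\omega$, this forces $\varphi_0 = 0$ in $L^2(\Omega)$, which completes the argument.

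I do not expect a genuinely hard step; the main care point is just the bookkeeping around $\mathcal{G}$. In particular one must recognise that membership in $\mathcal{G}$ supplies both the support condition that permits the factorisation $\varphi_0 = \chi_{\omega}^{*}\chi_{\omega}\varphi_0$ and the membership $\chi_{\omega}\varphi_0 \in \mathcal{E}$ needed to trigger Theorem~\ref{Proposition3.13}. Once the problem is set on the reduced space $L^2(\omega)$ and phrased in terms of $K_q\chi_\omega^*$, the conclusion is immediate from the characterisation of exact $\mathcal{E}$-observability.
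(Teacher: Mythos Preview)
Your proposal is correct and follows essentially the same route as the paper's proof: reduce $\|\varphi_0\|_{\mathcal{G}}=0$ to $K_q\varphi_0=0$, use the structure of $\mathcal{G}$ to write $\varphi_0=\chi_\omega^*\chi_\omega\varphi_0$ with $\chi_\omega\varphi_0\in\mathcal{E}$, and then conclude $\chi_\omega\varphi_0=0$ from $\mathrm{Ker}(K_q\chi_\omega^*)\cap\mathcal{E}=\{0\}$. Your write-up is in fact slightly more explicit than the paper's (you spell out the change of variable, the factorisation, and the appeal to Theorem~\ref{Proposition3.13}), but the argument is the same.
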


\begin{proof}
Consider $\varphi_{_{0}} \in \mathcal{G}$. Then,
$$
\left\| \varphi_{_{0}} \right\|_\mathcal{G} 
\quad \Longrightarrow \quad C\varphi(T-t) = 0 
\quad \hbox{for all}  \quad t \in [0,T].
$$ 
We have 
$$
\varphi_{_{0}} \in L^{2}(\Omega) \quad \Longrightarrow 
\quad \chi_{_{\omega}}\varphi_{_{0}} \in L^{2}(\omega)
$$
or 
$$
K_{q}(t) \chi_{_{\omega}}^{*}\chi_{_{\omega}}\varphi_{_{0}} 
= CS_{q}(t) \chi_{_{\omega}}^{*}\chi_{_{\omega}}\varphi_{_{0}} = 0.
$$
Hence, 
$$
\chi_{_{\omega}}\varphi_{_{0}} \in Ker(K_{q}\chi_{\omega}^{*}).
$$
For $\chi_{_{\omega}} \varphi_{_{0}} \in \mathcal{E}$, 
one has $\chi_{_{\omega}} \varphi_{_{0}} \in Ker(K_{q} \chi_{\omega}^{*}) 
\cap\, \mathcal{E} $ and, because the system is exactly 
$\mathcal{E}$-observable in $\omega$, $\chi_{_{\omega}}\varphi_{_{0}} = 0$. 
Consequently, $\varphi_{_{0}} = 0$ and \eqref{se110} is a norm.
The proof is complete.
\end{proof}

Consider the system 
\begin{equation}
\label{se111}
\begin{cases}
\mathcal{Q}\,^{C}_{t}D_{T}^{q}\Psi(x,t)
=  A^{*} \mathcal{Q}\Psi(x,t) + C^{*}C  \mathcal{Q} \varphi(x,t)
& \hbox{in} \quad  Q_{T} \\
\Psi(\xi,t) = 0  & \hbox{on} \quad \Sigma_{T}\\ 
\Psi(x,T) 
= 0 &  \hbox{in} \quad \Omega.
\end{cases}
\end{equation}
For $\varphi_{_{0}} \in \mathcal{G}$, 
we define the operator $\Lambda : \mathcal{G} \longrightarrow \mathcal{G}^{*}$ by
\begin{equation*}
\mathcal{N} \varphi_{_{0}} =  \mathcal{P}({}_{0}I_{T}^{1-q}\Psi(0)),
\end{equation*}  
where $\mathcal{P} = \chi_{\omega}^{*}\chi_{\omega}$ and $\Psi(0) = \Psi(x,0)$.
Let us now consider the system 
\begin{equation}
\label{se112}
\begin{cases}
\mathcal{Q}\, ^{C}_{t}D_{T}^{q}\Theta(x,t)
=  A^{*} \mathcal{Q} \Theta(x,t) + C^{*} \mathcal{Q}z(t)
& \hbox{in} \quad  Q_{T} \\
\Theta(\xi,t) = 0 &  \hbox{on} \quad \Sigma_{T}\\ 
 \Theta(x,T) = 0 &  \hbox{in} \quad \Omega.
\end{cases}
\end{equation}
If $\varphi_{_{0}}$ is chosen such that $\Theta(0) = \Psi(0)$ in $\omega$, 
then our problem of enlarged observability is reduced to solve the equation
\begin{equation}
\label{se113}
\mathcal{N} \varphi_{_{0}} = \mathcal{P}({}_{0}I_{T}^{1-q}\Theta(0)).
\end{equation} 

\begin{theorem}
\label{theorem4.17}
If the system \eqref{se11} together with the output \eqref{se19} is exactly 
$\mathcal{E}$-observable in $\omega$, then equation 
\eqref{se113} admits a unique solution $\varphi_{_{0}} \in \mathcal{G}$, 
which coincides with the initial state $y_{0}^{1}$ observed between  
$\alpha(\cdot)$ and $\beta(\cdot)$ in $\omega$. Moreover,
$y_{0}^{1} = \chi_{_{\omega}} \varphi_{_{0}}$.
\end{theorem}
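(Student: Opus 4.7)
The plan is to implement the Hilbert Uniqueness Method of Lions, adapted to the fractional setting using Lemmas~\ref{lemma2.8} and~\ref{lemma2.9}. First I would establish the duality identity
\[
\langle \mathcal{N}\varphi_{_{0}}, \varphi_{_{0}} \rangle
= \| \varphi_{_{0}}\|^{2}_{\mathcal{G}}.
\]
To derive it, take the $L^{2}(\Omega)$ inner product of the first equation of \eqref{se111} with the mild solution $\varphi$ of \eqref{se18}, integrate over $[0,T]$, and integrate by parts fractionally using Lemma~\ref{lemma2.8} to swap the reflected Caputo derivative with the forward one. Because $\varphi$ solves the homogeneous equation \eqref{se18}, only the boundary terms at $t = 0$ survive (the terms at $t = T$ vanish thanks to $\Psi(T) = 0$), and these reproduce exactly the pairing of $\varphi_{_{0}}$ against $\mathcal{P}({}_{0}I_{T}^{1-q}\Psi(0)) = \mathcal{N}\varphi_{_{0}}$; Lemma~\ref{lemma2.9} guarantees that the $(1-q)$-fractional integral is well-defined and yields the correct representation. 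The source term $C^{*}C\mathcal{Q}\varphi$ contributes $\int_{0}^{T}\|C\varphi(T-t)\|^{2}\,dt = \|\varphi_{_{0}}\|^{2}_{\mathcal{G}}$.

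Combined with Lemma~\ref{lemma4.16}, the identity shows that $\mathcal{N}$ is coercive and defines an inner product whose associated norm coincides with $\|\cdot\|_{\mathcal{G}}$. I would then complete $\mathcal{G}$ in this norm into a Hilbert space $\overline{\mathcal{G}}$, extend $\mathcal{N}$ by density, and apply the Lax--Milgram theorem (equivalently, the Riesz representation theorem, since $\mathcal{N}$ realizes the Riesz duality map) to conclude that \eqref{se113} admits a unique solution $\varphi_{_{0}} \in \overline{\mathcal{G}}$ for every datum in $\overline{\mathcal{G}}^{*}$, in particular for $\mathcal{P}({}_{0}I_{T}^{1-q}\Theta(0))$, which is supported in $\omega$ through $\mathcal{P} = \chi_{_{\omega}}^{*}\chi_{_{\omega}}$.

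To identify this abstract $\varphi_{_{0}}$ with $y_{0}^{1}$, observe that system \eqref{se112} is simply \eqref{se111} with the free trajectory $\varphi$ replaced by the true forward solution $y(t) = S_{q}(t)y_{0}$, because $z(t) = Cy(t)$. Consequently, \eqref{se113} rewrites as $\mathcal{N}(\varphi_{_{0}} - \chi_{_{\omega}}^{*}\chi_{_{\omega}}y_{0}) = 0$, and since $\chi_{_{\omega}}y_{0} = y_{0}^{1} \in \mathcal{E}$, the difference lies in $\mathcal{G}$. Injectivity of $\mathcal{N}$ on $\mathcal{G}$, which is precisely the condition $Ker(K_{q}\chi_{_{\omega}}^{*}) \cap \mathcal{E} = \{0\}$ supplied by exact $\mathcal{E}$-observability via Theorem~\ref{Proposition3.13}, then forces $\chi_{_{\omega}}\varphi_{_{0}} = y_{0}^{1}$.

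The main obstacle is carrying out the fractional integration by parts in the first step: tracking the reflection operator $\mathcal{Q}$ through the left- and right-sided Caputo derivatives, and matching the resulting boundary term at $t = 0$ with the $(1-q)$-fractional integral defining $\mathcal{N}$, requires a careful combined use of Lemmas~\ref{lemma2.8} and~\ref{lemma2.9}. Once that identity is secured, the remainder of the argument is standard HUM theory.
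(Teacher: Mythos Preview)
Your plan is sound and reaches the same key identity
\[
\langle \mathcal{N}\varphi_{_{0}},\varphi_{_{0}}\rangle=\|\varphi_{_{0}}\|_{\mathcal{G}}^{2},
\]
but you get there by a different computation than the paper. The paper does not perform a fractional integration by parts against~$\varphi$; instead, after using Lemma~\ref{lemma2.8} to rewrite \eqref{se111} as a forward Caputo problem, it writes down the \emph{mild solution} of~$\Psi$ explicitly,
\[
\Psi(0)=\int_{0}^{T}(T-\tau)^{q-1}H_{q}^{*}(T-\tau)C^{*}C\varphi(T-\tau)\,d\tau,
\]
and then invokes Lemma~\ref{lemma2.9} to turn ${}_{0}I_{T}^{1-q}\Psi(0)$ into $\int_{0}^{T}S_{q}^{*}(T-\tau)C^{*}C\varphi(T-\tau)\,d\tau$. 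Pairing this against $\varphi_{_{0}}$ and using $CS_{q}(T-\tau)\varphi_{_{0}}=C\varphi(T-\tau)$ gives the identity directly. So the paper's route is operator-theoretic (explicit semigroup formulas), whereas yours is variational (test the PDE against the forward trajectory and pick up the pairing as a boundary term). Your approach is conceptually closer to Lions' original HUM derivation and explains \emph{why} the $(1-q)$-fractional integral appears in the definition of~$\mathcal{N}$; the paper's approach avoids having to justify a fractional Green formula and instead relies on the ready-made Lemma~\ref{lemma2.9}. Note also that your final paragraph, identifying $\chi_{_{\omega}}\varphi_{_{0}}$ with $y_{0}^{1}$ via injectivity of~$\mathcal{N}$ and Theorem~\ref{Proposition3.13}, is more detailed than what the paper provides---there the identification is simply asserted once uniqueness is known.
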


\begin{proof}
By Lemma~\ref{lemma4.16}, if the system \eqref{se11} 
together with the output \eqref{se19} is exactly 
$\mathcal{E}$-observable in $\omega$, we see that $\left\| \cdot \right\|_{\mathcal{G}}$ 
is a norm of the space $\mathcal{G}$.\\
Now, we show that \eqref{se113} admits a unique solution in $\mathcal{G}$. 
For any $\varphi_{_{0}} \in \mathcal{G}$, equation \eqref{se113} admits 
a unique solution if $\mathcal{N}$ is an isomorphism. Then, 
$$
\begin{array}{rll}
\left\langle \mathcal{N} \varphi_{_{0}}, \varphi_{_{0}} \right\rangle_{L^{2}(\Omega)}  
& = & \left\langle \mathcal{P}({}_{0}I_{T}^{1-q}\Psi(0)), 
\varphi_{_{0}} \right\rangle_{L^{2}(\Omega)} \\
& = & \left\langle \chi_{_{\omega}}^{*}\chi_{_{\omega}} 
({}_{0}I_{T}^{1-q}\Psi(0)), \varphi_{_{0}} \right\rangle_{L^{2}(\Omega)}  \\
& = & \left\langle {}_{0}I_{T}^{1-q}\Psi(0), \varphi_{_{0}} \right\rangle_{L^{2}(\omega)}.
\end{array}
$$
Moreover, by Lemma~\ref{lemma2.8}, we see that system 
\eqref{se111} can be rewritten as
\begin{equation*}
\begin{cases}
^{C}_{0}D_{t}^{q} \mathcal{Q}\Psi(x,t)
=  A^{*} \mathcal{Q}\Psi(x,t) + C^{*}C  \mathcal{Q} \varphi(x,t)
& \hbox{in} \quad  Q_{T} \\
\Psi(\xi,t) = 0  & \hbox{on} \quad \Sigma_{T}\\ 
\Psi(x,T) 
= 0 &  \hbox{in} \quad \Omega,
\end{cases}
\end{equation*}
and its unique mild solution is
$$
\Psi(t) = S^{*}_{q}(T-t)\Psi(T) 
+ \int_{t}^{T}(T-\tau)^{q-1} H^{*}_{q}(T-\tau)C^{*}C\varphi(T-\tau) d\tau
$$
and
$$
\Psi(0) = \displaystyle \int_{0}^{T} (T-\tau)^{q-1} H^{*}_{q}(T-\tau) 
C^{*}C\varphi(T-\tau) d\tau,
$$
where
$$
H^{*}_{q}(t) = q  \int_{0}^{\infty} 
\theta \xi_{q} (\theta) R^{*}(t^{q}\theta) d\theta
$$
with $(R^{*}(t))_{t \geq 0}$ the strongly continuous semi-group generated by $A^{*}$. 
We obtain  by Lemma~\ref{lemma2.9} that
$$
\begin{array}{rll}
\left\langle \mathcal{N} \varphi_{_{0}}, \varphi_{_{0}} \right\rangle_{L^{2}(\Omega)}  
& = & \left\langle {}_{0}I_{T}^{1-q}\Psi(0), \varphi_{_{0}} \right\rangle\\
& = & \displaystyle \left\langle  \int_{0}^{T} S^{*}_{q}(T-\tau) 
C^{*}C\varphi(T-\tau) d\tau, \varphi_{_{0}} \right\rangle\\
& = & \displaystyle  \int_{0}^{T} \displaystyle \left\langle 
C\varphi(T-\tau), C S_{q}(T-\tau) \varphi_{_{0}} \right\rangle d\tau\\
& = & \displaystyle \int_{0}^{T} \displaystyle \left\| 
C \varphi(T-\tau) \right\|^{2} d \tau\\
& = & \displaystyle \left\| \varphi_{_{0}} \right\|^{2}_{\mathcal{G}}.
\end{array}
$$
Then the equation \eqref{se113} has a unique solution that is also the initial 
state to be estimated between $\alpha(\cdot)$ and 
$\beta(\cdot)$ in the subregion $\omega$ given by
$$
y_{0}^{1} = \chi_{_{\omega}} \varphi_{_{0}}.
$$
The proof is complete.
\end{proof}


\subsection{Zone sensors}

Let us come back to system \eqref{se11} and suppose that the measurements 
are given by an internal zone sensor defined by $(D,f)$. The system 
is augmented with the output function
\begin{equation}
\label{se114}
z(t) = \displaystyle \int_{D} y(x,T-t) f(x) dx .
\end{equation}
In this case, we consider \eqref{se18}, $\mathcal{G}$ 
given by \eqref{se17}, and we define a semi-norm on $\mathcal{G}$ by
\begin{equation}
\label{se115}
\| \varphi_{_{0}}\|_{\mathcal{G}}^{2} 
= \displaystyle \int_{0}^{T} \displaystyle 
\left\langle  \varphi(T-t), f \right\rangle _{L^{2}(D)}^{2}dt
\end{equation}
with
\begin{equation}
\label{se116}
\begin{cases}
\mathcal{Q} \; ^{C}_{t}D_{T}^{q}\Psi(x,t)
=  A^{*} \mathcal{Q} \Psi(x,t) + \left\langle 
\mathcal{Q}\varphi(t), f \right\rangle_{L^{2}(D)} 
\chi_{_{D}}f(x)  & \hbox{in} \quad  Q_{T} \\
\Psi(\xi,t) = 0 &  \hbox{on} \quad \Sigma_{T}\\ 
\Psi(x,T) = 0 
& \hbox{in} \quad \Omega.
\end{cases}
\end{equation}
We introduce the operator
\begin{equation}
\label{se117}
\begin{array}{rll}
\mathcal{N} : \mathcal{G}& \longrightarrow &  \mathcal{G}^{*}\\
\varphi_{_{0}} & \longrightarrow
& \mathcal{N} \varphi_{_{0}} = \mathcal{P}({}_{0}I_{T}^{1-q}\Psi(0)),
\end{array}
\end{equation}   
where $\mathcal{P} = \chi_{\omega}^{*}\chi_{\omega}$ and $\Psi(0) = \Psi(x,0)$.
 Let us consider the system 
\begin{equation}
\label{se118}
\begin{cases}
\mathcal{Q} \; ^{C}_{t}D_{T}^{q}\Theta(x,t)
=  A^{*} \mathcal{Q} \Theta(x,t) + \left\langle \mathcal{Q} z(t), 
f \right\rangle _{L^{2}(D)} \chi_{_{D}}f(x) 
& \hbox{in} \quad  Q_{T} \\
\Theta(\xi,t) = 0 &  \hbox{on} \quad \Sigma_{T}\\ 
\Theta(x,T) 
= 0 &  \hbox{in} \quad \Omega.
\end{cases}
\end{equation}
If $\varphi_{_{0}}$ is chosen such that $\Theta(0) = \Psi(0)$ in $\omega$, 
then \eqref{se118} can be seen as the adjoint of system \eqref{se11} 
and our problem of enlarged observability consists to solve the equation
\begin{equation}
\label{se119}
\mathcal{N} \varphi_{_{0}} = \mathcal{P}({}_{0}I_{T}^{1-q} \Theta(0)).
\end{equation} 

\begin{theorem}
If system \eqref{se11} together with the output \eqref{se114} 
is exactly $\mathcal{E}$-observable in $\omega$, 
then equation \eqref{se119} has a unique solution 
$\varphi_{_{0}} \in \mathcal{G}$, which coincides with the initial 
state $y_{0}^{1}$ observed between $\alpha(\cdot)$ 
and $\beta(\cdot)$ in $\omega$.
\end{theorem}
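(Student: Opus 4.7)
The plan is to imitate the HUM argument used in Theorem~\ref{theorem4.17}, since the zone-sensor case differs from the pointwise one only in the concrete form of the observation operator. Identifying $Cy=\langle y,f\rangle_{L^{2}(D)}$ and its adjoint $C^{*}z = z\,\chi_{_{D}}f$, one sees that the inhomogeneity $\langle\varphi(t),f\rangle_{L^{2}(D)}\chi_{_{D}}f$ appearing in \eqref{se116} is precisely $C^{*}C\varphi(t)$, so the backward problem \eqref{se116} and the operator $\mathcal{N}$ defined in \eqref{se117} have exactly the same structure as their pointwise analogues. The task is then to carry out the three familiar HUM ingredients: showing \eqref{se115} is a norm, computing $\langle\mathcal{N}\varphi_{_{0}},\varphi_{_{0}}\rangle$, and invoking an isomorphism argument to solve \eqref{se119}.

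First I would prove the zone-sensor analogue of Lemma~\ref{lemma4.16}. If $\|\varphi_{_{0}}\|_{\mathcal{G}}=0$, then $\langle\varphi(T-t),f\rangle_{L^{2}(D)}=C S_{q}(T-t)\varphi_{_{0}}=0$ for almost every $t\in[0,T]$. Restricting to $\omega$ and using $\varphi_{_{0}}\in\mathcal{G}$ yields $K_{q}\chi_{_{\omega}}^{*}\chi_{_{\omega}}\varphi_{_{0}}=0$, so $\chi_{_{\omega}}\varphi_{_{0}}\in\mathrm{Ker}(K_{q}\chi_{_{\omega}}^{*})\cap\mathcal{E}$. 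Exact $\mathcal{E}$-observability, through Theorem~\ref{Proposition3.13}, forces $\chi_{_{\omega}}\varphi_{_{0}}=0$, and since $\varphi_{_{0}}$ vanishes off $\mathcal{E}\subset\omega$ by definition of $\mathcal{G}$, we conclude $\varphi_{_{0}}=0$.

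Next I would apply Lemma~\ref{lemma2.8} to rewrite \eqref{se116} as a forward Caputo evolution for $\mathcal{Q}\Psi$ on $[0,T]$, and apply the mild-solution representation of Lemma~\ref{lemma2.1} together with the variation-of-constants formula to obtain
\[
\Psi(0)=\int_{0}^{T}(T-\tau)^{q-1}H_{q}^{*}(T-\tau)\,\langle\varphi(T-\tau),f\rangle_{L^{2}(D)}\,\chi_{_{D}}f(x)\,d\tau,
\]
with $H_{q}^{*}$ the adjoint of the kernel introduced in the proof of Theorem~\ref{theorem4.17}. Applying Lemma~\ref{lemma2.9} then converts ${}_{0}I_{T}^{1-q}\Psi(0)$ into an $S_{q}^{*}$-weighted integral, and the duality pairing unfolds as
\[
\langle\mathcal{N}\varphi_{_{0}},\varphi_{_{0}}\rangle_{L^{2}(\Omega)}=\int_{0}^{T}\bigl\langle\varphi(T-\tau),f\bigr\rangle_{L^{2}(D)}\,\bigl\langle\chi_{_{D}}f,\,S_{q}(T-\tau)\varphi_{_{0}}\bigr\rangle\,d\tau=\|\varphi_{_{0}}\|_{\mathcal{G}}^{2}.
\]
Coercivity of $\mathcal{N}$ with respect to the norm $\|\cdot\|_{\mathcal{G}}$, combined with its obvious symmetry, gives via a Lax--Milgram / HUM completion argument that $\mathcal{N}:\mathcal{G}\to\mathcal{G}^{*}$ is an isomorphism, so \eqref{se119} has a unique solution $\varphi_{_{0}}\in\mathcal{G}$; the reconstruction identity $y_{0}^{1}=\chi_{_{\omega}}\varphi_{_{0}}$ then follows from the fact that $\Theta(0)=\Psi(0)$ on $\omega$ by construction and from the definition of $\mathcal{G}$.

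The main obstacle, as in the pointwise case, is the careful use of Lemma~\ref{lemma2.9} to trade the fractional integral ${}_{0}I_{T}^{1-q}$ applied to the kernel $H_{q}^{*}$ for a clean $S_{q}^{*}$ term; this is what enables the chain of identities to collapse to $\|\varphi_{_{0}}\|_{\mathcal{G}}^{2}$. One must also verify that the completion of $\mathcal{G}$ under $\|\cdot\|_{\mathcal{G}}$ remains a closed subset compatible with the constraint $\varphi_{_{0}}=0$ on $L^{2}(\Omega)\setminus\mathcal{E}$, so that the isomorphism genuinely produces an element of $\mathcal{G}$ and not just of its completion.
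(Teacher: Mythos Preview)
Your proposal is correct and follows exactly the route taken in the paper: the paper's own proof consists solely of the sentence ``The proof is similar to the proof of Theorem~\ref{theorem4.17},'' and what you have written is precisely that adaptation, with $C y=\langle y,f\rangle_{L^{2}(D)}$ and $C^{*}z=z\,\chi_{_{D}}f$ inserted into the HUM computation. Your additional remarks on the completion of $\mathcal{G}$ go beyond what the paper verifies, but they do not diverge from its approach.
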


\begin{proof}
The proof is similar to the proof of Theorem~\ref{theorem4.17}.
\end{proof}


\section{Example}
\label{sec:5}

Let us consider the system \eqref{se15} together with the output 
\eqref{se16} and let $b \in \Omega_{1}= [0,1] $,\\ 
$\displaystyle\omega_{1}=\left[ \frac{1}{4},\frac{1}{2}\right] $, 
$\alpha_{1} (x) = \displaystyle \left| y_{|\omega_{1}}^{0}(x) \right| - \frac{1}{2}$, 
$\beta_{1} (x) = \displaystyle \left| y_{|\omega_{1}}^{0}(x) \right| + \frac{1}{2}$ 
and   $\mathcal{G}_{1}$ be the set defined by
$$
\mathcal{G}_{1} = \left\{\, g \in L^{2}(\Omega_{1}) \; | \; g = 0 
\quad \hbox{in} \; L^{2}(\Omega_{1}) \backslash 
\mathcal{E}_{1} \, \right\}.
$$
By Lemma~\ref{lemma4.16}, if the system \eqref{se15} together with the output 
\eqref{se16} is exactly $\mathcal{E}_{1}$-observable in $\omega_{1}$, then, 
for any $\varphi_{_{0}}\in \mathcal{G}_{1}$, we see that
$$
\varphi_{_{0}} \longmapsto \| \varphi_{_{0}} \|_{\mathcal{G}_{1}}^{2} 
= \displaystyle  \int_{0}^{T} \left\| 
\varphi(b,T-t) \right\|^{2}  dt
$$
defines a norm on $\mathcal{G}_{1}$, where $\varphi(x,t)$ solves
\begin{equation*}
\begin{cases}
^{C}_{0}D_{t}^{0.6}\varphi(x,t)
=  \displaystyle \frac{\partial^{2}} {\partial x^{2}} \varphi(x,t) 
& \hbox{in}  \quad  [0,1]\times[0,T] \\
\varphi(0,t) = \varphi(1,t) = 0 &  \hbox{in} \quad [0,T]\\ 
\varphi(x,0) = \varphi_{_{0}}(x) &  \hbox{in} \quad [0,1].
\end{cases}
\end{equation*}
The regional enlarged observability problem is equivalent 
to solving the equation 
\begin{equation}
\label{se120}
\mathcal{N}_{1} \varphi_{_{0}} 
= \displaystyle\mathcal{P} ({}_{0}I_{T}^{0.4}\,\Theta(0)),
\end{equation}
where $\Theta$ satisfies
$$
\begin{cases}
\mathcal{Q} \; ^{C}_{t}D_{T}^{0.6}\Theta(x,t)
=  \displaystyle \frac{\partial^{2}} {\partial x^{2}} 
\mathcal{Q}  \Theta(x,t) + \delta(x-b)z(T-t)
& \hbox{in} \quad  [0,1] \times [0,T] \\
\Theta(0,t) = \Theta(1,t) = 0 & \hbox{in} \quad  [0,T] \\ 
\Theta(x,T) 
= 0 &  \hbox{in} \quad [0,1].
\end{cases}
$$
Let $\mathcal{N}_{1}$ be defined by
$$
\mathcal{N}_{1} \varphi_{_{0}} 
= \displaystyle\mathcal{P} ({}_{0}I_{T}^{0.4}\,\Psi(0)),
$$
which is an isomorphism from $\mathcal{G}_{1}$ to $\mathcal{G}_{1}^{*}$, 
and $\Psi$ be the solution of the following system:
\begin{equation*}
\begin{cases}
\mathcal{Q}\,^{C}_{t}D_{T}^{0.6}\Psi(x,t)
=  \displaystyle \frac{\partial^{2}} {\partial x^{2}}  
\mathcal{Q}\Psi(x,t) + C^{*}C  \mathcal{Q} \varphi(x,t)
& \hbox{in} \quad  [0,1]\times[0,T] \\
\Psi(0,t) = \Psi(1,t) = 0  & \hbox{in} \quad [0,T]\\ 
\Psi(x,T) 
= 0 &  \hbox{in} \quad [0,1].
\end{cases}
\end{equation*}
By Theorem~\ref{theorem4.17}, we conclude that \eqref{se120} admits 
a unique solution $\varphi_{_{0}} \in \mathcal{G}_{1}$ and 
$y_{|\omega_{1}}^{0} = \chi_{_{\omega_{1}}}\varphi_{_{0}}$, 
provided that the system \eqref{se15}--\eqref{se16} is exactly 
$\mathcal{E}_{1}$-observable in $\omega_{1}$.


\section{Conclusion}
\label{sec:6}

In this paper, regional enlarged observability of Caputo time 
fractional diffusion systems of order $\alpha \in (0,1)$ is discussed. 
The results we present here can also be extended to complex fractional 
order distributed parameter systems. This and other questions, as to give 
numerical results and a real application to support our theoretical analysis, 
are being considered and will be addressed elsewhere.


\section*{Acknowledgments}

This work was supported by Hassan II Academy of Sciences and Technology 
project 630/2016 and by Portuguese funds through the Center for Research 
and Development in Mathematics and Applications (CIDMA) and The Portuguese 
Foundation for Science and Technology (FCT), within project UID/MAT/04106/2013.



\medskip

Received April 2018; revised July 2018.

\medskip


\end{document}